\renewcommand\appendix{\setcounter{secnumdepth}{-2}}
\newtheorem{thm}{Theorem}[section]
\newtheorem{lem}[thm]{Lemma}
\newtheorem{prop}[thm]{Proposition}
\theoremstyle{remark}
\newtheorem{re}[thm]{Remark}
\newtheorem{ex}[thm]{Example}
\newcommand{\Q}{{\mathbb Q}}
\newcommand{\p}{{\mathfrak p}}
\numberwithin{equation}{section}
\begin{document}

\author{Zilong He}
\address{Department of Mathematics, University of Hong Kong, Pokfulam, Hong Kong}
\email{zilong.he@hotmail.com}
\title{Inequalities for inert primes and their applications}
\subjclass[2010]{11A41,11E12,11E20}
\date{\today}
\keywords{inert prime, quadratic nonresidue, regular quadratic forms}
\begin{abstract}
For any given non-square integer $ D\equiv 0,1 \pmod{4} $, we prove Euclid's type inequalities for the sequence $ \{q_{i}\} $ of all primes satisfying the Kronecker symbol $ (D/q_{i})=-1 $, $ i=1,2,\cdots, $ and give a new criterion on a ternary quadratic form to be irregular as an application, which simplifies Dickson and Jones's argument in the classification of regular ternary quadratic forms to some extent.
\end{abstract}
\maketitle

\section{Introduction}	

The proof on infinitude of primes of Euclid indicates that the following inequality
\[ p_{n+1}<p_{1}p_{2}\cdots p_{n} \]
holds for $ n\ge 2 $, where $ p_{i} $ denotes the $ i $-th prime. This can be regarded as the beginning of the study of inequalities of prime numbers. A natural motivation is to seek similar inequalities for some forms of primes. van Golstein Brouwers et al. \cite{brouwers_totally_nodate} developed an approach to study totally Goldbach numbers which is dependent on the conjecture (for which there is numerical evidence) that for every prime $ q $ and $ a=1,\ldots, q-1 $, the inequality \[ r_{n+1}<r_{1}r_{2}\cdots r_{n} \] holds for $ n\ge 3 $, where $ r_{i} $ is $ i $-th prime with $ r_{i}\equiv a \pmod{q}$. However, by a series arguments \cite[p.\hskip 0.1cm 254]{brouwers_totally_nodate}, it was shown that the confirmation for these inequalities is related to the generalization of Schinzel and Sierpinski's conjecture that $ r_{1}<q^{2} $ for all primes $ q $ and all $ a=1,\ldots, q-1 $, which is unsolved so far. By relaxing the restriction on the primes $r_i$ to allow all primes whose Kronecker symbols are negative instead of taking $r_i$ in a fixed congruence class, in this paper we deduce an analogous inequality unconditionally. 

To be precise, let $ D\equiv 0,1\pmod{4} $ be a non-square discriminant and set $d:=|D|$ for convenience. Also, for a given discriminant $ D $, we define the set
\begin{align*}
\mathbb{P}(D)&:=\{q\,:\, \mbox{$ q $ is prime and } (D/q)=-1\},
\end{align*}
where $ (D/\cdot) $ is the Kronecker symbol, and let $ \{q_{i}\}_{D} $ be the sequence of all primes in $\mathbb{P}(D) $ in ascending order. For $ i,j\in\mathbb{Z} $, we also set $  \delta_{i,j}=1 $ if $ i=j $, and $ 0 $ otherwise. Then we prove the following result.

\begin{thm}\label{thm1}
	For a given non-square discriminant $D$, set
	\begin{equation*}
	H(D):=
	\begin{cases}
	d & D\not=-3,-4,5,\\
	11 &  otherwise.
	\end{cases}
	\end{equation*}
	Let $ q_{i_{0}+1} $ be the least prime greater than $ H(D) $ in $ \{q_{i}\}_{D} $. Then the inequality
	\begin{equation}\label{eq10}
	\begin{aligned}
	q_{i+1}<q_{1}q_{2}\cdots q_{i}
	\end{aligned}
	\end{equation}
	holds for $ i\ge i_{0}$. 
\end{thm}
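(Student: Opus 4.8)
The plan is to set $P_i := q_1 q_2 \cdots q_i$ and run an induction on $i$ whose entire inductive step rests on a single gap estimate, namely that every inert prime $q_i > H(D)$ is followed by an inert prime $q_{i+1} < q_i^2$. Granting this, the induction is immediate: if $q_{i+1} < P_i$ has been shown, then $P_{i+1} = P_i\, q_{i+1} > q_{i+1}^2 > q_{i+2}$, which is exactly \eqref{eq10} at the next step. The point worth stressing is that the inductive burden is entirely front-loaded: for large $i$ the product $P_i$ grows faster than any fixed power of $q_i$, so the constraint $q_{i+1} < P_i$ becomes extremely slack and would follow from a far weaker gap bound; the genuinely binding instances are the first few, near $i = i_0$, where $P_i$ has not yet had room to grow. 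I would therefore isolate two tasks: the base case $q_{i_0+1} < P_{i_0}$, and the gap lemma for $q > H(D)$.

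For the gap lemma I would write $\chi := (D/\cdot)$, which (as $D$ is a non-square discriminant) is a non-principal real Dirichlet character modulo $d$ that is completely multiplicative; an inert prime is simply a prime with $\chi(p) = -1$. To produce an inert prime in $(q, q^2]$ it suffices to show $\sum_{q < p \le q^2}\bigl(1 - \chi(p)\bigr) > 0$, since each term is $2$ at an inert prime and $0$ at a split prime, while the only primes with $\chi(p) = 0$ are the finitely many ramified $p \mid D$, all of which are $\le d < q$ and hence absent from the range. Thus the lemma reduces to the prime character-sum bound $\bigl|\sum_{q<p\le q^2}\chi(p)\bigr| < \pi(q^2) - \pi(q)$, which I would establish from explicit estimates for $\psi(x,\chi) = \sum_{n\le x}\Lambda(n)\chi(n)$, combining an explicit zero-free region with Burgess/Pólya--Vinogradov bounds to reach the comparatively short range $x \asymp d^2$ forced by $q > d$. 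The one subtlety that makes this unconditional is actually favorable: a potential exceptional (Siegel) zero of $L(s,\chi)$ contributes a term $-x^{\beta}/\beta$ to $\psi(x,\chi)$, i.e. it pushes the character sum in the negative direction and only increases the count of inert primes, so it never obstructs the lower bound we need.

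Finally, for the base case I would count: by Pólya--Vinogradov the least inert prime satisfies $q_1 \ll \sqrt d \log d$, and a positive proportion (about half of the reduced residues modulo $d$) of primes up to $d$ are inert, so $P_{i_0}$, the product of all inert primes $\le d$, vastly exceeds $d^2$, while the gap lemma gives $q_{i_0+1} < d^2$; hence $q_{i_0+1} < P_{i_0}$. The discriminants $D = -3, -4, 5$ are exactly the ones with too few small inert primes for this counting to close (for instance, for $D=-4$ the only inert prime $\le 4$ is $3$, whereas the next inert prime is $7 > 3$), which is why the threshold is raised to $H = 11$ and these cases are settled by a direct finite check. I expect the \textbf{main obstacle} to be the gap lemma in its required uniform and explicit form: pushing an effective prime number theorem for the single real character $\chi$ down to $x$ of size about $d^2$ for \emph{every} discriminant, with all constants explicit and the exceptional-zero case absorbed as above; everything else, the induction and the base-case count, is routine once that estimate is in hand.
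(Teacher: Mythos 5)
Your high-level induction is sound as a reduction: if every inert prime $q_{i+1}>H(D)$ is followed by an inert prime $q_{i+2}<q_{i+1}^{2}$, and if the base case $q_{i_0+1}<q_1\cdots q_{i_0}$ holds, then \eqref{eq10} follows. The genuine gap is that neither ingredient can be established by the analytic tools you invoke. For the gap lemma you need an inert prime in $(q,q^2]$ with $q$ as small as $d$, i.e.\ cancellation in $\sum_{p\le x}\chi(p)$ at $x\asymp d^2$. Unconditional control of $\psi(x,\chi)$ from an explicit zero-free region only begins when $x$ is at least exponentially large in a power of $\log d$ (and, via Siegel--Walfisz, with ineffective constants); Burgess and P\'olya--Vinogradov bound sums of $\chi$ over \emph{integers} in intervals, and converting these to sums over \emph{primes} by bilinear methods again requires $x$ far larger than any fixed power of $d$. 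Your observation that a Siegel zero pushes $\psi(x,\chi)$ in the favorable direction does not address the non-exceptional zeros, which are the actual obstruction at $x\asymp d^2$. The base case has the same defect: the assertion that about half of the primes up to $d$ are inert presupposes equidistribution of primes in residue classes modulo $d$ already at the threshold $x=d$, where there are on average fewer than one prime per reduced class and no unconditional equidistribution statement is known; it is consistent with current knowledge that the product of the inert primes below $d$ is much smaller than $d^2$.

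The paper avoids analysis entirely. Its key lemma (Lemma \ref{lem:newprime}) is a Euclid-type construction: for suitable $s_1,s_2$ built from the already-found inert primes, the integer $M=\pm(s_2-ds_1)$ or $M'=ds_1+s_2$ satisfies $(D/M)=-1$ by periodicity and multiplicativity of the Kronecker symbol, hence has an inert prime factor coprime to $s_1s_2$; taking $s_2$ to be (essentially) $Q_i=q_1\cdots q_i$ bounds the \emph{new} inert prime by roughly $Q_i\pm d$ directly, which is exactly \eqref{eq10} --- no gap bound of the form $q_{i+2}<q_{i+1}^{2}$ is ever needed. The real work there is the finitely many cases near the turning index (Lemmas \ref{lem22} and \ref{lem23}), handled by elementary case analysis together with Panaitopol's refinement of Bonse's inequality for the ordinary primes. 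If you wish to keep your induction scheme, you must replace the analytic gap lemma by a construction of this kind; as proposed, the argument cannot be completed.
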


For example, when $ D=-4 $, $ \mathbb{P}(D)=\{3,\,7,\,11,\,19,\,23,\,31,\cdots\} $ and $ q_{1}=3<q_{2}=7<H(D)=q_{3}=11<q_{4}=19 $. Then $ q_{i+1}<q_{1}q_{2}\cdots q_{i} $ holds for $ i\ge 3 $ by Theorem \ref{thm1}. Similarly, when $ D=-12 $, $ q_{1}=5<q_{2}=11<H(D)=12<q_{3}=17 $. It follows that $q_{i+1}<q_{1}q_{2}\cdots q_{i} $ also holds for $ i\ge 2 $ from Theorem \ref{thm1}. Note that $ \{q_{i}\}_{D} $ exactly consists of all the primes of the form  $4\ell-1$ (resp. $6\ell-1$), when $ D=-4 $ (resp.~$ D=-12$). As we know, Molsen \cite{molsen_zur_1941} showed that for $ n\ge 118 $, there exists at least one prime of the form $ 4\ell-1 $ between $ n $ and $ 4n/3 $. Erd\H{o}s \cite{erdos_uber_1935} also proved that for $ n\ge 6 $, there exists at least one prime of the form $ 6\ell -1 $ between $ n $ and $ 2n $. Using these bounds, one can deduce \eqref{eq10} by induction for $ D=-4,-12 $, but, except for asymptotic results, there seem to be no known Bertrand's postulate for general $ D $ in the literature.

A weak version of \eqref{eq10} was first considered by L. E. Dickson \cite{dickson_ternary_1926} in the classification of regular (ternary) quadratic forms $ x^{2}+by^{2}+cz^{2} $, i.e., those quadratic forms which obey the Hasse-Minkowski local-global principle \cite{conrad_localglobal}. Given a positive integer $ b $, let $ p_{i}$'s be the odd primes for which the Diophantine equation $ x^{2}+by^{2}=p_{i} $ has no solution in $ \mathbb{Z} $ and $p_{1}<\cdots<p_{i_{0}}<b<p_{i_{0}+1}< \cdots.$ Dickson proved that the inequality $p_{i+1}<p_{1}p_{2}\cdots p_{i}$ holds for $ i\ge i_{0} $ \cite[footnote, p.\hskip 0.1cm 336]{dickson_ternary_1926} and further excluded irregular quadratic forms by this inequality \cite[Theorem 5]{dickson_ternary_1926}, which is another motivation for Theorem \ref{thm1}. The advantage of Inequaltiy \eqref{eq10} is that the primes in \eqref{eq10} can be described with the Kronecker symbol instead of an integral equation, so that we are able to apply Dickson's argument uniformly, thereby obtaining Theorem \ref{thm5} below. Furthermore, modifying Dickson's approach, the author and B. Kane show that when $ m $ is sufficiently large, there are no primitive regular ternary $ m $-gonal forms by virtue of analogous inequalities involving primes with additional restrictions (see \cite{he_regular_2019} for details). 

Before stating our criterion, we introduce some basic terminology from the theory of quadratic forms.  Let $ F $ be a field and $ R\subset F $ a ring. For an $n$-ary quadratic polynomial $ f(x_{1},\cdots,x_{n})\in F[x_{1},\cdots, x_{n}] $ and $\ell\in F$, we say that $\ell$ is \begin{it}represented by $f$ in $ R $\end{it} if the equation $ f(x)=\ell$ is solvable with $x\in R^n$. Also, when $ F=\mathbb{Q} $, we say that $\ell $ is locally (resp. globally) represented by $ f $ in $ \mathbb{Z}_{p} $ for each prime $ p $ including $ p=\infty $ (resp. in $ \mathbb{Z} $). A quadratic polynomial $ f $ over $ \mathbb{Q} $ is said to be \begin{it}regular\end{it} if it globally represents all rational numbers that are locally represented by $ f $. We also call $ f $ {\it irregular} if $ f $ is not regular.

\begin{thm}\label{thm5}
	Let $ a,b,c $ be positive integers with $ a\le b\le c $ and whose odd parts are pairwise co-prime. Assume that
	\begin{equation}\label{eq12}
	\begin{aligned}
	ax^{2}+by^{2}+cz^{2}\equiv n \pmod{8}
	\end{aligned}  
	\end{equation} 
	is solvable for $ n=1,3,5,7 $. If the form $ ax^{2}+by^{2}+cz^{2} $ is regular, then $ c\le 4ab+3\delta_{ab,1} $.
\end{thm}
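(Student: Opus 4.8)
The plan is to argue the contrapositive: assuming $c > 4ab + 3\delta_{ab,1}$, I will produce a positive integer that is locally represented by $f := ax^{2}+by^{2}+cz^{2}$ at every place but is not globally represented, so that $f$ is irregular. The governing object is the discriminant $D := -4ab$, which is a non-square discriminant with $d = 4ab$; note that $D = -4$ exactly when $ab = 1$, and $D \notin \{-3,-4,5\}$ once $ab \ge 2$, so that $H(D) = 4ab$ for $ab \ge 2$ while $H(-4) = 11$. The candidate obstructions will be the inert primes $q$ of $\{q_{i}\}_{D}$, i.e.\ the primes with $(D/q) = -1$.

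First I would record two facts about such an inert prime $q$. (i) It is never represented by the binary form $ax^{2}+by^{2}$ over $\Z$: since $(D/q)=-1$ forces $q \nmid 2ab$ and makes $-ab$ a nonresidue mod $q$, a solution of $ax^{2}+by^{2}=q$ would give $(x/y)^{2}\equiv -b/a \pmod q$, which is impossible, while $q\mid x,\ q\mid y$ is excluded by comparing $q$-adic valuations. (ii) If moreover $q \nmid c$, then $q$ is locally represented by $f$ at every place. At an odd prime $p$ with $p\nmid abc$ this is immediate, as $f$ is then unimodular over $\Z_{p}$ and hence (being isotropic over $\Q_{p}$) represents every element of $\Z_{p}$; when $p \mid abc$ the pairwise coprimality of the odd parts of $a,b,c$ leaves a nondegenerate binary form modulo $p$, which represents every nonzero residue, so one lifts a representation of the unit $q$ by Hensel's lemma; at $p=q$ the same universality applies because $q\nmid abc$, and the real place is trivial by positive definiteness. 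The one delicate place is $p=2$, which is precisely where hypothesis \eqref{eq12} enters: since $f$ represents all of $1,3,5,7 \pmod 8$, every odd integer is represented modulo $8$, and for a ternary form this suffices for $2$-adic representability. Making this last implication rigorous is the main technical obstacle.

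With these in hand, the key reduction is that for any $N<c$ a representation $f(x,y,z)=N$ forces $z=0$ (otherwise $cz^{2}\ge c>N$), so such an $N$ is globally represented by $f$ if and only if $N=ax^{2}+by^{2}$ has an integral solution. Consequently, if some inert prime $q$ satisfies $q<c$ and $q\nmid c$, then by (i)--(ii) it is locally but not globally represented, and $f$ is irregular. Hence, assuming $f$ regular, every inert prime $q<c$ must divide $c$. Writing $q_{1}<\cdots<q_{k}$ for the inert primes below $c$, distinctness and divisibility give $q_{1}q_{2}\cdots q_{k}\mid c$, whence $q_{1}\cdots q_{k}\le c\le q_{k+1}$, the last inequality because $q_{k+1}$ is the first inert prime not lying below $c$.

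Finally I would feed this into Theorem~\ref{thm1}. If $k\ge i_{0}$, then \eqref{eq10} gives $q_{k+1}<q_{1}\cdots q_{k}\le c\le q_{k+1}$, a contradiction; hence $k\le i_{0}-1$ and $c\le q_{i_{0}}\le H(D)$. For $ab\ge 2$ this is already the desired bound $c\le 4ab$. For $ab=1$ one has $H(-4)=11$ and $i_{0}=3$, so at most the two inert primes $3,7$ lie below $c$ and thus $c\le 11$; but if $c>7$ then both $3$ and $7$ divide $c$, forcing $21\mid c$, incompatible with $c\le 11$. Therefore $c\le 7=4ab+3\delta_{ab,1}$, completing the contrapositive. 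I expect the local analysis at odd primes and the valuation bookkeeping to be routine; the genuine difficulties are the $2$-adic step (upgrading representability of every odd class modulo $8$ to honest $2$-adic representability) and the careful casework isolating the exceptional discriminant $D=-4$ that produces the correction term $3\delta_{ab,1}$.
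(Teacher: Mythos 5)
Your proposal is correct and follows essentially the same route as the paper's proof: work with the discriminant $D=-4ab$, observe that inert primes are never represented by $ax^{2}+by^{2}$ (so any such prime below $c$ and prime to $c$ is locally but not globally represented, since $z$ is forced to vanish), conclude that every inert prime below $c$ must divide $c$, and invoke Theorem~\ref{thm1} to cap their number. The $2$-adic lifting step you flag as the main obstacle is precisely what the paper outsources to Dickson's Theorem~\ref{thm231}, and your finite bookkeeping ($k\le i_{0}-1$, hence $c\le q_{i_{0}}\le H(D)$, plus the $21\mid c$ argument when $ab=1$) is an equivalent replacement for the paper's infinite-descent conclusion that $q_{1}q_{2}\cdots q_{i}\mid c$ for every $i$.
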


\begin{re}
	Under the assumption of Theorem \ref{thm5}, if $ c>4ab+3\delta_{ab,1} $, then $ ax^{2}+by^{2}+cz^{2} $ is irregular. Recall Jones's argument \cite[p.\hskip 0.1cm19--20]{jones_representation_1928} in which the irregular forms with $ a>1 $ can be ruled out by taking $ n=1 $ or $ 2 $ and comparing with the forms in Table I \cite[p.\hskip 0.1cm 125]{jones_representation_1928}. Note that the coefficient $ a $ of each form in Table I is $ 1 $. Hence it is enough to consider $ a=1 $, which is exactly the case Dickson considers in \cite{dickson_ternary_1926}. Those theorems in \cite{dickson_ternary_1926} satisfying the condition \eqref{eq12} will be simplified by Theorem \ref{thm5}.
\end{re}

\section{Proof}\label{sec2}
In Section \ref{sub21}, we will show Theorem \ref{thm1} by using elementary arguments similar to the Euclidean proof of the infinitude of primes, and then we will give a proof of Theorem \ref{thm5} as an application of Theorem \ref{thm1}.

\subsection{Inequalities involving $ \{q_{i}\}_{D} $}\label{sub21}
In this subsection, we always write $ Q_{i}:=q_{1}q_{2}\cdots q_{i} $ for the product of the first $ i $ terms of $ \{q_{i}\}_{D} $ and $Q_{i}:=1$ if $i\leq 0$ for brevity. Also, we denote by $\{p_{i}\}$ the original prime sequence ($p_{1}=2$, $p_{2}=3$, $\cdots$).

As in the introduction, we let $D$ be a non-square discriminant. Since $D$ is not a perfect square, the Kronecker symbol $ (D/\cdot) $ is a nonprincipal character modulo $ d $. Thus there exists some integer $N<d $ such that $ (D/N)=-1 $ and consequently there also exists some prime $ q$ dividing $ N $ such that $ (D/q)=-1 $. Therefore, $\mathbb{P}(D)\neq\emptyset$. Also, $ d\ge 3 $ and $ 2\le q_{1}<d $. Moreover, if $ (D/2)\neq -1 $, then we further have $ d\ge 4 $ and $ 3\le q_{1}<d $. Given $ D $, we also let $\Omega_{D}^{-}(n)$ count the number of prime divisors of $n$ in $ \mathbb{P}(D) $, counting multiplicities. 

\begin{lem}\label{lem:newprime}
	Let $D$ be a non-square discriminant. Let $s_1$ and $s_2$ be positive integers and $\gcd(ds_{1},s_{2})=1$. Suppose that $ D<0 $ or $ \Omega_{D}^{-}(s_{2}) $ is odd and set
	\begin{equation*}
	M:=\begin{cases}
	s_{2}-ds_{1}&\text{if } 2\nmid \Omega_{D}^{-}(s_2)\text{ and }s_{2}>ds_{1},\\
	ds_{1}-s_{2}&\text{if }D<0, \hskip 0.2cm 2\mid \Omega_{D}^{-}(s_2),\text{ and } s_{2}<ds_{1},\\
	&\text{or if } D>0, \hskip 0.2cm 2\nmid \Omega_{D}^{-}(s_2),\text{ and }s_{2}<ds_{1}.
	\end{cases}
	\end{equation*}
	Then there exists some prime $ q\in\mathbb{P}(D) $ dividing $ M $ but prime to $ s_{1}s_{2} $ and hence $ q\le M $. 
	
	Moreover, if $ 2\nmid \Omega_{D}^{-}(s_{2}) $, then there exists some prime $ q^{\prime}\in\mathbb{P}(D) $ for which $ q^{\prime}\mid M^{\prime}:=ds_{1}+s_{2} $ but $ \gcd(q^{\prime},s_{1}s_{2})=1 $ and also hence $ q^{\prime}\le M^{\prime}$.
\end{lem}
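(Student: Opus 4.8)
The engine of the proof is the elementary identity
\[
(D/n) = (-1)^{\Omega_{D}^{-}(n)} \qquad \text{whenever } \gcd(n,d)=1,
\]
which I would establish first. Indeed, the Kronecker symbol $(D/\cdot)$ is completely multiplicative in its lower entry, and for a prime $p$ with $p\nmid d$ one has $(D/p)=-1$ precisely when $p\in\mathbb{P}(D)$ and $(D/p)=+1$ otherwise; factoring $n$ into primes and counting the inert ones with multiplicity yields the displayed formula. I would also record the two standard facts that $(D/\cdot)$ is periodic modulo $d$ (it is the nonprincipal character modulo $d$ noted above) and that $(D/{-1})=\operatorname{sgn}(D)$.

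With these in hand the first assertion reduces to showing $(D/M)=-1$ in each of the three cases, after which the identity forces $\Omega_{D}^{-}(M)$ to be odd---in particular positive---so that at least one prime $q\in\mathbb{P}(D)$ divides $M$. In every case $M>0$ and $\gcd(M,d)=\gcd(s_{2},d)=1$, the latter because $\gcd(ds_{1},s_{2})=1$; thus the identity applies to $M$ itself. When $M=s_{2}-ds_{1}$ we have $M\equiv s_{2}\pmod{d}$, so periodicity gives $(D/M)=(D/s_{2})=(-1)^{\Omega_{D}^{-}(s_{2})}=-1$ since $\Omega_{D}^{-}(s_{2})$ is odd. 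When $M=ds_{1}-s_{2}$ we have $M\equiv -s_{2}\pmod{d}$, so multiplicativity and periodicity give $(D/M)=(D/{-1})(D/s_{2})=\operatorname{sgn}(D)\,(-1)^{\Omega_{D}^{-}(s_{2})}$; plugging in $\operatorname{sgn}(D)=-1$ with $\Omega_{D}^{-}(s_{2})$ even (the $D<0$ subcase), or $\operatorname{sgn}(D)=+1$ with $\Omega_{D}^{-}(s_{2})$ odd (the $D>0$ subcase), yields $(D/M)=-1$ in both.

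It remains to check that any prime $q\in\mathbb{P}(D)$ dividing $M$ is automatically coprime to $s_{1}s_{2}$; granting this, $q\mid M$ and $M>0$ give $q\le M$. Since $(D/q)=-1\neq 0$ we have $q\nmid d$. If $q\mid s_{1}$ then $q\mid ds_{1}$, and combined with $q\mid M=\pm(s_{2}-ds_{1})$ this forces $q\mid s_{2}$, contradicting $\gcd(ds_{1},s_{2})=1$; the same elimination shows $q\nmid s_{2}$. Hence $\gcd(q,s_{1}s_{2})=1$.

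For the ``moreover'' clause, in which $\Omega_{D}^{-}(s_{2})$ is odd, I would argue identically with $M^{\prime}=ds_{1}+s_{2}$: here $M^{\prime}\equiv s_{2}\pmod{d}$ and $\gcd(M^{\prime},d)=1$, so $(D/M^{\prime})=(D/s_{2})=-1$, producing a prime $q^{\prime}\in\mathbb{P}(D)$ with $q^{\prime}\mid M^{\prime}$; the coprimality check is verbatim. The only delicate point in the whole argument is the bookkeeping with the Kronecker symbol---in particular its values at $-1$ and at $2$ and its periodicity modulo $d$ for possibly non-fundamental $D$---together with ensuring that the identity $(D/n)=(-1)^{\Omega_{D}^{-}(n)}$ is invoked only when $\gcd(n,d)=1$, so that no ramified prime contributes a vanishing factor; everything else is a short case check.
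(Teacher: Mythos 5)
Your proposal is correct and follows essentially the same route as the paper: both use the periodicity and complete multiplicativity of the Kronecker symbol to compute $(D/M)=(D/\pm 1)(D/s_2)=(D/\pm 1)(-1)^{\Omega_D^-(s_2)}=-1$ in each case, then extract a prime $q\in\mathbb{P}(D)$ dividing $M$ and verify $\gcd(q,s_1s_2)=1$ from $\gcd(ds_1,s_2)=1$. Your version merely makes explicit the identity $(D/n)=(-1)^{\Omega_D^-(n)}$ for $\gcd(n,d)=1$ and the value $(D/-1)=\operatorname{sgn}(D)$, which the paper leaves implicit.
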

\begin{proof}
	By construction, we have $M>0 $ and clearly $\gcd(s_{1},s_{2})=1$ from the assumption that $ \gcd(ds_{1},s_{2})=1 $. One can check that
	\begin{align*}
	\gcd(M,s_{1}s_{2})=\gcd(M,s_{1})\gcd(M,s_{2})=\gcd(s_{2},s_{1})\gcd(ds_{1},s_{2})=1.
	\end{align*}
	We then use the periodicity and multiplicativity of the Kronecker symbol to compute, for $M=\pm \left(s_2-ds_1\right)$,
	\[
	\left(\dfrac{D}{M}\right) = \left(\dfrac{D}{\pm s_2}\right)=\left(\dfrac{D}{\pm 1}\right)\left(\dfrac{D}{s_{2}}\right)=\left(\dfrac{D}{\pm 1}\right)(-1)^{\Omega_{D}^{-}(s_2)}.
	\]
	In each case $(D/M)=-1$ by the definition of $M$, so there is a prime $q$ dividing $M$ for which $(D/q)=-1$, and $q\nmid s_1s_2$ since $\gcd(M,s_1s_2)=1$. 
	
	Clearly, $ M^{\prime}>0 $. Similarly, one can check that $ \gcd(M^{\prime},s_{1}s_{2})=\gcd(s_{2},s_{1})\gcd(ds_{1},s_{2})=1$ and $ (D/M^{\prime})=(D/s_{2})=-1 $ because $ 2\nmid \Omega_{D}(s_{2}) $. Hence there exists a prime $ q^{\prime} $ dividing $ M^{\prime} $ and $ \gcd(q^{\prime},s_{1}s_{2})=1 $. 
\end{proof}  

For clarifying Lemma \ref{lem:newprime}, we illustrate some examples.

\begin{ex}\label{ex:newprime}
	
	Consider the sets $ \mathbb{P}(D) $ for $ D=-3,5 $.
	
	 (1) When $ D=-3<0 $, $ d=3 $ and $ \mathbb{P}(D)=\{2,\,5,\,11,\,17,\,23,\,29,\,41,\cdots\}$,
	 
	 (a) for $ M=29-2d $ with $ (s_{1},s_{2})=(2,29) $, $ M>0 $, $ \Omega_{D}^{-}(s_{2})=1 $, $ 23\mid M $ and $ 23\nmid s_{1}s_{2} $;
	 
	 (b) for $ M=17d-22 $ with $ (s_{1},s_{2})=(17,2\cdot 11) $, $ M>0 $, $ \Omega_{D}^{-}(s_{2})=2 $, $ 29\mid M $ and $ 29\nmid s_{1}s_{2} $;
	 
	 (c) for $ M^{\prime}=3d+41 $ with $ (s_{1},s_{2})=(3,41) $, $ \Omega_{D}^{-}(s_{2})=1 $, $ 5\mid M^{\prime} $ and $ 5\nmid s_{1}s_{2} $.
	
	 \noindent (2) When $ D=5>0 $, $ d=5 $ and $   \mathbb{P}(D)=\{2,\,3,\,7,\,13,\,17,\,23,\,37,\,43,\cdots\}$,
	 
	  (a) for $ M=78-7d $ with $ (s_{1},s_{2})=(7,2\cdot 3\cdot 13) $, $ M>0 $, $\Omega_{D}^{-}(s_{2})=3$, $ 43\mid M $ and $ 43\nmid s_{1}s_{2} $;
	  
	  (b) for $ M=8d-17 $ with $(s_{1},s_{2})=(8,17)$, $ M>0 $, $ \Omega_{D}^{-}(s_{2})=1 $, $ 23\mid M $ and $ 23\nmid s_{1}s_{2} $;
	  
	  (c) for $ M^{\prime}=3d+22 $ with $ (s_{1},s_{2})=(3,2\cdot 11) $, $ \Omega_{D}^{-}(s_{2})=1 $ (as $ (D/11)=1 $), $ 37\mid M^{\prime} $ and $ 37\nmid s_{1}s_{2} $.
\end{ex}

As seen above, we are able to construct a positive integer divisible by a prime $ q $ for which $ (D/q)=-1 $, $ q\nmid s_{1}s_{2} $ and $ q $ is bounded by $ds_{1} $ or $ s_{2} $ or $ ds_{1}+s_{2} $ by Lemma \ref{lem:newprime}. Hence given a term in $ \{q_{i}\}_{D} $ (except for $ q_{1} $ and $ q_{2} $), it is possibly bounded by the product of the previous terms, as long as the values of $ s_{1} $ and $ s_{2} $ are appropriately chosen from some of the previous terms so that $ ds_{1} $ or $ s_{2} $ or $ ds_{1}+s_{2} $ is bounded by the product of the chosen primes. For instance, given $ \{q_{i}\}_{D} $ with $ D=5 $, we see that $ q_{8}=43\mid 78-7d<2\cdot 3\cdot 7\cdot 13<\prod_{i=1}^{7}q_{i} $ from Example \ref{ex:newprime} (2)(a). 

To apply the first assertion in Lemma \ref{lem:newprime}, we require the condition $ M>0 $. However, it is not easy to determine $ ds_{1}>s_{2} $ or $ ds_{1}<s_{2} $ in general. Therefore we also need the following lemma.

 \begin{lem}\label{lem:turning_index}
   Let $ \{q_{i}\}_{D} $ be the prime sequence associated with a given non-square discriminant $ D $. Let $M$ be a positive integer and $ M\ge d $. Then there exists a unique integer $ n\ge 1 $ depending on $ D $ and $ M $ such that
	\[	q_{1}q_{2}\cdots q_{n}<M
	\qquad\text{and}\qquad
	q_{1}q_{2}\cdots q_{n+1}>M.\]
\end{lem}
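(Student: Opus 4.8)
The plan is to locate $n$ directly as a turning index of the strictly increasing sequence of partial products $Q_i=q_1q_2\cdots q_i$, in the same way one locates a real number between two consecutive powers of $2$. Everything is driven by two structural facts about $\{Q_i\}$. First, since each $q_i\ge 2$, the sequence $Q_0=1<Q_1<Q_2<\cdots$ is strictly increasing (using the convention $Q_i=1$ for $i\le 0$ recorded above). Second, since $Q_i\ge 2^{i}$, it diverges to $+\infty$. Consequently, for the fixed $M$ the set $\{\,i\ge 0:Q_i<M\,\}$ is a finite, nonempty initial segment of $\mathbb{Z}_{\ge 0}$: nonempty because $Q_0=1<d\le M$, and finite because $Q_i\to\infty$.

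I would then set $n:=\max\{\,i\ge 0:Q_i<M\,\}$, which is well defined by the previous paragraph. By maximality one immediately gets $Q_n<M\le Q_{n+1}$, and uniqueness is forced by strict monotonicity: if some other index $m$ also satisfied $Q_m<M$ and $Q_{m+1}>M$ with, say, $m<n$, then $m+1\le n$ would give $Q_{m+1}\le Q_n<M$, contradicting $Q_{m+1}>M$. The hypothesis $M\ge d$ enters at exactly one place, namely to upgrade $n\ge 0$ to $n\ge 1$: the discussion preceding the lemma records that $q_1<d$, so $Q_1=q_1<d\le M$ places the index $1$ in the above set, whence $n\ge 1$. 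This is the only point at which the lower bound $M\ge d$ is actually used.

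The step I would expect to require the most care — and the one that is not pure monotonicity — is upgrading the inequality $Q_{n+1}\ge M$ supplied by maximality to the strict inequality $Q_{n+1}>M$ claimed in the statement. This is precisely the assertion that $M$ is not itself the partial product $Q_{n+1}$. For an arbitrary integer $M\ge d$ the boundary case $M=Q_{n+1}$ is a priori possible, so the strict upper inequality is the one thing that must be read together with the way $M$ arises via Lemma~\ref{lem:newprime} in the application (where $M$ is divisible by a prime $q\in\mathbb P(D)$ prime to the relevant product, so it need not coincide with a partial product). Apart from this boundary bookkeeping the argument is complete: the genuine content of the lemma is simply that every $M\ge d$ has a well-defined turning index $n\ge 1$ among the partial products of $\{q_{i}\}_{D}$, obtained as the largest index with $Q_n<M$.
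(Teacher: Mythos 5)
Your argument is essentially identical to the paper's: the author likewise defines the set of partial products $Q_i$ that are less than $M$, notes it is nonempty (since $q_1<d\le M$) and bounded above, and takes its maximal element $Q_n$ to get $Q_n<M$ and $Q_{n+1}>M$. The boundary issue you flag is genuine but is equally present in the paper, which also deduces the strict inequality $Q_{n+1}>M$ from maximality alone; it is harmless in the applications, where $M=d$ and every $q_i\in\mathbb{P}(D)$ satisfies $(D/q_i)=-1$, hence $q_i\nmid d$, so $\gcd(Q_{n+1},d)=1$ and the case $Q_{n+1}=M$ cannot occur.
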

\begin{proof}
	For given the sequence $ \{q_{i}\}_{D} $ (associated with $ D $), define the set $ A $ by all the products $ Q_{i}=q_{1}q_{2}\cdots q_{i} $ less than $ M $; that is
	\[ A:=\{Q_{i}:Q_{i}<M,\,i=1,2,\cdots\}. \]
	Then $q_{1}\in A$ because $q_1<d\le M$. Hence $ \emptyset\not=A\subseteq\mathbb{N} $ and it is bounded from above. By the well-ordering principle, there exists a unique maximal element in $ A $, say $ Q_{n}$. It follows that $Q_{n}<M $ and $ Q_{n+1}>M$ from the maximality of $ Q_{n} $.
\end{proof}

 We call the unique $ n $ satisfying  Lemma \ref{lem:turning_index} the \begin{it}turning index\end{it} of $ D $ and $ M $, and denote it by $ i(D,M) $, or simply $ i(D) $ when $ M=|D| $. To obtain a bound of $ q_{i(D)+1} $, we need an inequality involving primes given by Panaitopol \cite[Corollary]{panaitopol_inequality_2000}, which is a generalization of Bonse's inequality \cite[p.\hskip 0.1cm 187--192]{rademacher_enjoyment_1957}.
\begin{prop}[Panaitopol]\label{prop20}
Let $ \ell\ge 1 $ be an integer. Then $ p_{n+1}^{\ell}<p_{1}p_{2}\cdots p_{n} $ holds for $ n\ge 2\ell $.
\end{prop}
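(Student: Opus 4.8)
The plan is to read the inequality as a comparison between the primorial $p_1p_2\cdots p_n$ and a power of the next prime, to collapse the two parameters $\ell$ and $n$ into a single extremal case, and then to finish with standard explicit estimates for the Chebyshev function together with a direct check of the small indices.

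First I would note that, for positive integers, $n\ge 2\ell$ is equivalent to $\ell\le\lfloor n/2\rfloor$, and that $p_{n+1}\ge 2$ makes $p_{n+1}^{\ell}$ nondecreasing in $\ell$. Hence the whole family follows from the single extremal inequality
\[
p_{n+1}^{\lfloor n/2\rfloor}<p_1p_2\cdots p_n,\qquad n\ge 2.
\]
Taking logarithms and writing $\theta(x):=\sum_{p\le x}\log p$ for Chebyshev's function, so that $\log(p_1\cdots p_n)=\theta(p_n)$, and using $\lfloor n/2\rfloor\le n/2$, it suffices to prove the cleaner bound
\[
\tfrac{n}{2}\,\log p_{n+1}<\theta(p_n),\qquad n\ge 2.
\]

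For all indices beyond an explicit threshold $N_0$ I would invoke standard explicit prime estimates of Rosser--Schoenfeld type: an upper bound of the form $p_{n+1}<(n+1)\bigl(\log(n+1)+\log\log(n+1)\bigr)$, valid once $n+1\ge 6$, and a lower bound $\theta(x)>c\,x$ with a constant $c>\tfrac12$ valid for $x\ge x_0$, combined with $p_n>n\log n$. Substituting these turns the target into an inequality of the shape $c\,n\log n>\tfrac{n}{2}\bigl(\log n+\log\log n\bigr)$ up to lower-order terms, which holds for all large $n$ precisely because $c>\tfrac12$; in logarithmic form the two sides differ by a factor of about $2$, so the estimate is never close to tight for large $n$.

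It then remains to verify the finitely many indices $2\le n<N_0$ below the threshold directly, by computing $p_1\cdots p_n$ and comparing it with $p_{n+1}^{\lfloor n/2\rfloor}$; here the inequality already holds with ample room, for instance $p_1\cdots p_4=210>121=p_5^{2}$ and $p_1\cdots p_6=30030>4913=p_7^{3}$. The main obstacle is thus not conceptual but quantitative: one has to calibrate the constants in the explicit estimates so that $N_0$ is small enough to make the residual finite check short, while respecting the range of validity of the bounds used and the loss incurred in replacing $\lfloor n/2\rfloor$ by $n/2$. Since the genuine margin widens monotonically with $n$, any sufficiently sharp pair of estimates will close the argument, and the only real care is the bookkeeping near the crossover.
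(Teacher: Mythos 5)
The paper does not prove this proposition at all: it is imported verbatim from Panaitopol's paper (cited as \cite[Corollary]{panaitopol_inequality_2000}), so there is no internal proof to compare yours against. Judged on its own, your strategy is sound. The reduction to the single extremal case $\ell=\lfloor n/2\rfloor$ is valid, since $p_{n+1}^{\ell}$ is increasing in $\ell$ and every admissible pair $(\ell,n)$ has $\ell\le\lfloor n/2\rfloor$; and the analytic step $\tfrac{n}{2}\log p_{n+1}<\theta(p_n)$ does follow for large $n$ from explicit bounds of Rosser--Schoenfeld type, precisely because any admissible constant $c>\tfrac12$ in $\theta(x)>cx$ leaves a margin $(c-\tfrac12)\log n$ that eventually dominates the $\tfrac12\log\log n$ error. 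What you have, though, is a proof scheme rather than a proof: the constants, the threshold $N_0$, and the finite verification below it are left uncalibrated, and these are exactly the points where such arguments require care (e.g.\ the range of validity of the chosen $\theta$-bound versus the size of $p_n$ for small $n$). It is also worth noting that the cited source takes a cleaner, more elementary route: Panaitopol first establishes the sharper intermediate inequality $p_1p_2\cdots p_n>p_{n+1}^{\,n-\pi(n)}$ for $n\ge 2$ and then deduces the stated corollary from $\pi(n)\le n-\ell$ when $n\ge 2\ell$, which sidesteps most of the Chebyshev-function bookkeeping your version would have to carry out. So: correct idea, genuinely different (and heavier) route than the original, and still missing the quantitative closing steps.
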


\begin{lem}\label{lem22}
		Let $ \{q_{i}\}_{D} $ be the prime sequence associated with a given non-square discriminant $ D $. Then $ q_{i(D)+1}<d$ except for $D=-3,-4$. More precisely,
\noindent

\noindent
\begin{enumerate}[leftmargin=*,label={\rm(\arabic*)}]	
\item if $ D<0 $ and $ 2\mid i(D) $ or $ D>0 $ and $ 2\nmid i(D) $, then $ q_{i(D)+1}\le d-Q_{i(D)} $;
\item if $ D>0 $ and $ 2\mid i(D) $, then $ q_{i(D)+1}\le d-Q_{i(D)-2} $;
\item if $ D<0$ and $ 2\nmid i(D) $, then $ q_{i(D)+1}\le d-Q_{i(D)-2} $, except for $ D=-3,-4 $.
 \end{enumerate}
\end{lem}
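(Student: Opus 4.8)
Write $i=i(D)$, so that $Q_i<d<Q_{i+1}$ by the definition of the turning index, and recall at the outset that every term of $\{q_i\}_D$ is prime to $d$ (since $(D/q_j)=-1\neq 0$ forces $q_j\nmid d$), whence $\gcd(d,Q_i)=1$. The engine of the whole argument is Lemma~\ref{lem:newprime} applied with $s_1=1$ and $s_2$ a product of terms of $\{q_i\}_D$. I would exploit two features: first, the lemma's own computation yields $\gcd(M,s_2)=1$ for free, so the inert prime $q$ it produces is automatically prime to $s_2$; second, if I arrange $Q_i\mid s_2$, then $q$ is prime to all of $q_1,\dots,q_i$, so, being inert, it must satisfy $q\ge q_{i+1}$, giving $q_{i+1}\le q\le M$. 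With $s_1=1$ the relevant value is $M=|d-s_2|$, so keeping $s_2<d$ forces $M=d-s_2<d$. The only genuine decision is how to choose $s_2$ so that the parity hypothesis of Lemma~\ref{lem:newprime} is met: the branch producing $M=d-s_2$ requires $\Omega_D^-(s_2)$ to be even when $D<0$ and odd when $D>0$.

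For case (1) the parity is already correct. I take $s_2=Q_i$, so $\Omega_D^-(s_2)=i$ is even when $D<0$ and odd when $D>0$, which is exactly the admissible branch with $s_2=Q_i<d=ds_1$. Lemma~\ref{lem:newprime} then yields an inert $q\mid d-Q_i$ prime to $Q_i$, so $q_{i+1}\le q\le d-Q_i$. There is no room for an exception here, since the output is forced to be $\ge q_{i+1}$.

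In cases (2) and (3) the parity of $\Omega_D^-(Q_i)=i$ is wrong, so $s_2=Q_i$ is inadmissible, and I would repair this by dropping one prime: for each $k$ set $s_2=Q_i/q_k$, which has $\Omega_D^-(s_2)=i-1$, now of the correct parity, while still $s_2<d$, so that $M_k=d-Q_i/q_k\le d-Q_{i-1}\le d-Q_{i-2}$. The produced prime is prime to $Q_i/q_k$; if in addition $q_k\nmid M_k$, then it is prime to all of $Q_i$, hence $\ge q_{i+1}$, and $q_{i+1}\le M_k\le d-Q_{i-2}$ as claimed. Thus it suffices to find one index $k$ with $d\not\equiv Q_i/q_k\pmod{q_k}$. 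As a second lever, whenever $Q_iq_1<d$ I can instead take $s_2=Q_iq_1$ (parity $i+1$, again correct): here $Q_i\mid s_2$ outright, so the output is $\ge q_{i+1}$ with no congruence to check, and $q_{i+1}\le d-Q_iq_1\le d-Q_{i-2}$.

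The one configuration defeating both levers is the \emph{rigid} case in which $q_k\mid d-Q_i/q_k$ for \emph{every} $k$ \emph{and} $Q_iq_1\ge d$; this is the step I expect to be the main obstacle. The divisibilities say $d\equiv Q_i/q_k\pmod{q_k}$ simultaneously for all $k$, which by the Chinese Remainder Theorem pins $d$ to the single residue $\sum_k Q_i/q_k$ modulo $Q_i$, and together with $Q_i<d\le Q_iq_1$ this is very restrictive. The plan is to observe that $Q_iq_1\ge d>Q_i$ forces the least inert prime $q_1$ to be abnormally large relative to the primorial $Q_i\ge p_1\cdots p_i$; using Panaitopol's inequality (Proposition~\ref{prop20}) to control this growth, together with an elementary upper bound on the least inert prime, the rigid case is confined to finitely many small discriminants, and in fact to $i=1$, where $s_2=1$ and one is simply asking whether $d-1$ possesses an inert prime factor other than $q_1$. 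A direct inspection of these finitely many $d$ then shows the conclusion holds in every case except $D=-3,-4$, for which the inert part of $d-1$ is exactly $q_1$ and indeed $q_2\ge d$; these are precisely the listed exceptions.
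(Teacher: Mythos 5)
Your case (1) matches the paper exactly, and your two ``levers'' for cases (2) and (3) are individually sound. But the proof lives or dies in what you call the rigid case, and there your sketch has a genuine gap. First, the claim that the rigid case is ``confined \dots in fact to $i=1$'' cannot be right: in case (2) the turning index $i(D)$ is even, so $i=1$ never occurs there, and the paper's own analysis must grind through $i(D)=2$ (and, in case (3), through $i(D)=3$, which ends in a computer verification over all discriminants up to $29^{5}$). Second, you have not supplied any mechanism that actually reduces the rigid case to finitely many discriminants. Your conditions $q_k\mid d-Q_i/q_k$ for all $k$ together with $Q_i<d\le Q_iq_1$ pin $d$ to one residue class modulo $Q_i$ inside an interval containing up to $q_1$ candidates, but nothing bounds $q_1,\dots,q_i$; and the ``elementary upper bound on the least inert prime'' you invoke does not exist unconditionally in anything like the strength you would need (the least quadratic nonresidue is only known to be $O(d^{1/(4\sqrt{e})+\varepsilon})$).

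The ingredient you are missing is the one the paper leans on throughout: in the contradiction scenario one shows, for \emph{every} prime $p<q_1$ with $p\nmid d$, that $(D/p)=1$ and a suitable application of Lemma~\ref{lem:newprime} (with $p$ inserted into $s_2$) forces $q_1\mid p-1$, which is absurd; hence $\prod_{p<q_1}p\mid d$. Only after independently establishing a structural upper bound of the form $d<q_1^{\ell}$ (e.g.\ $d<q_1^{2}$, $q_1^{3}$, or $q_1^{5}$ depending on the subcase) does Proposition~\ref{prop20} bite, bounding $q_1$ and leaving a finite list to check. Your sketch cites Panaitopol but omits both halves of this pincer --- the ``all small primes divide $d$'' step and the derivation of $d<q_1^{\ell}$ from the rigid divisibilities --- and also omits the pigeonhole argument (two of the quantities $d-q_1Q_{i-2}$, $d-q_2Q_{i-2}$, $d-Q_{i-1}$ being divisible by the \emph{same} prime from the narrow window $\{q_{i-1},q_i\}$) that the paper uses to force $i(D)$ down to $2$ or to $\{1,3\}$ in the first place. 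As written, the rigid case --- which is where essentially all of the content of this lemma resides --- is asserted rather than proved.
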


\begin{proof}
	 Write $ n_{0}=i(D) $. First, we have $ Q_{n_{0}}<d $ and $ Q_{n_{0}+1}>d $ from Lemma \ref{lem:turning_index}.
	
\noindent
	{\rm (1)} If $ D<0 $ and $ 2\mid n_{0} $ or $ D>0 $ and $ 2\nmid n_{0} $, then take $ s_{1}=1 $ and $ s_{2}=Q_{n_{0}}=q_{1}q_{2}\cdots q_{n_{0}} $. By Lemma \ref{lem:newprime}, we have
	$q_{n_{0}+1}\le q_{j}\mid d-Q_{n_{0}}$ for some $ j\ge n_{0}+1 $.
	
\noindent
	{\rm (2)}  Suppose $ q_{n_{0}+1}>d-Q_{n_{0}-2} $. As $ D>0 $ and $ n_{0}-1 $ is odd, Lemma \ref{lem:newprime} implies that there exists $q_{j}\mid d-Q_{n_0-1}$ with $j\geq n_0$. Since $d-Q_{n_0-1}<d-Q_{n_{0}-2}$ and $q_{n_0+1}>d-Q_{n_{0}-2}$, we see that $j=n_0$ and hence $ q_{n_{0}}\mid d-Q_{n_{0}-1} $.
	
	Assume that $ n_{0}\ge 4 $. Since $ 2 \nmid \Omega_{D}^{-}(q_{i}Q_{n_{0}-2}) $ ($ i=1,2 $), Lemma \ref{lem:newprime} implies that 
	\begin{equation*}
	\begin{cases} 
	q_{j_{1}}\mid d-q_{1}Q_{n_{0}-2}, \\
	q_{j_{2}} \mid d-q_{2}Q_{n_{0}-2}.
	\end{cases}	
	\end{equation*}
	for some $ n_{0}-1\le j_{1},j_{2}\le n_{0} $, again using the fact that $ q_{n_{0}+1}>d-Q_{n_{0}-2}$ and $q_{j_i}\le d-q_{1}Q_{n_{0}-2}$.  If $ j_{1}=j_{2}=n_{0}-1 $, then 
	\[
	q_{n_{0}-1}\mid (d-q_{1}Q_{n_{0}-2})-(d-q_{2}Q_{n_{0}-2})=Q_{n_{0}-2}\!\left(q_{2}-q_{1}\right).
	\]
	It follows that $q_{n_{0}-1}\mid q_{2}-q_{1} $, a contradiction. Without loss of generality, we thus have $ j_{1}= n_{0} $. Then $q_{n_{0}}\mid Q_{n_{0}-2}(q_{n_{0}-1}-q_{1})$ follows from $ q_{n_{0}}\mid d-Q_{n_{0}-1}$. This implies $ q_{n_{0}}\mid q_{n_{0}-1}-q_{1} $, which is also impossible. Hence $ n_{0}=2$.
    
    Now $ q_{3}>d-1 $, $ q_{1}\mid d-q_{2} $ and $ q_{2}\mid d-q_{1} $. If $ q_{1}^{2}q_{2}<d $, then $ q_{3}\le q_{j}\mid d-q_{1}^{2}q_{2}<d-1  $ for some $j\geq 3$ by Lemma \ref{lem:newprime}, which contradicts $ q_{3}>d-1 $. So $ d<q_{1}^{2}q_{2} $. Now suppose $ q_{1}^{3}<d $. Then $ q_{2}\mid d-q_{1}^{3} $ by Lemma  \ref{lem:newprime} (as $ q_{3}>d $). It follows that
    \begin{align*}
    	q_{2}\mid (d-q_{1})-(d-q_{1}^{3})=q_{1}(q_{1}-1)(q_{1}+1)
    \end{align*}
    and hence $ q_{2}\mid q_{1}+1 $. We must therefore have $ q_{1}=2 $ and $ q_{2}=3 $. Thus $ 8=q_{1}^{3}<d<q_{1}^{2}q_{2}=12 $. However there are no positive non-square discriminants in this range. We deduce that $ d<q_{1}^{3} $.
	For any prime $p<q_{1} $, note that $ pq_{2}<Q_{2}<d $ (as $ n_{0}=2 $). If $p\nmid D$, then we must have $(D/p)=1$, and hence since $ \Omega_{D}^{-}(pq_{2})=1 $ and $ D>0 $ Lemma \ref{lem:newprime} implies that $ q_{1}\mid d-pq_{2} $. Thus  
	\begin{align*}
		q_{1}\mid (d-q_{2})-(d-pq_{2})=(p-1)q_{2},
	\end{align*}
	which implies $ q_{1}\mid p-1 $. This is impossible because $q_1>p$. Thus we conclude that $p\mid d $ for all primes $p<q_1$. This implies that 
	\begin{align*}
		\prod\limits_{p<q_{1}}p\mid d<q_{1}^{3}.
	\end{align*}
	But when $ q_{1}\ge 17=p_{7} $, the inequality 
	\begin{align*}
			q_{1}^{3}<\prod\limits_{p<q_{1}}p
	\end{align*}
	holds by Proposition \ref{prop20} ($ \ell=3 $). Hence we only need to consider $ q_{1}\in \{2,3,5,7,11,13\} $. In fact, the value of $ D $ can be determined by the relation $ q_{1}^{2}<Q_{2}<d<q_{1}^{3} $. Hence one can check that only $ D\in \{5,8,12\} $ satisfies $ q_{3}>d-q_{1} $, but the turning index is $ i(D)=1 $ for these $ D $, yielding (2).

\noindent
	{\rm (3)} Suppose that $ q_{n_{0}+1}>d-Q_{n_{0}-2} $. Since $ D<0 $ and $ n_{0}-1 $ is even, Lemma \ref{lem:newprime} implies that there exists $q_{j}\mid d-Q_{n_0-1}$ with $j\geq n_0$. Since $d-Q_{n_0-1}\le d-Q_{n_{0}-2}$ and $q_{n_0+1}>d-Q_{n_{0}-2}$, we see that $j=n_0$ and hence $ q_{n_{0}}\mid d-Q_{n_{0}-1} $.

 Assume that $ n_{0}\ge 5 $. Since $ 2 \mid \Omega_{D}^{-}(q_{i}Q_{n_{0}-2}) $ ($ i=1,2 $), Lemma \ref{lem:newprime} implies that 
	\begin{equation*}
	\begin{cases}

	q_{j_{1}}\mid d-q_{1}Q_{n_{0}-2}, \\
	q_{j_{2}} \mid d-q_{2}Q_{n_{0}-2},
	\end{cases}	
	\end{equation*}
for some $ n_{0}-1\le j_{1},j_{2}\le n_{0} $, again using the fact that $ q_{n_{0}+1}>d-Q_{n_{0}-2}$ and $q_{j_i}<d-q_{1}Q_{n_{0}-2}$.  If $ j_{1}=j_{2}=n_{0}-1 $, then 
\[
	q_{n_{0}-1}\mid (d-q_{1}Q_{n_{0}-2})-(d-q_{2}Q_{n_{0}-2})=Q_{n_{0}-2}\!\left(q_{2}-q_{1}\right).
\]
	It follows that $q_{n_{0}-1}\mid q_{2}-q_{1} $, a contradiction. If $ j_{1}= n_{0} $ (resp. $j_2=n_0$), then $q_{n_{0}}\mid Q_{n_{0}-2}(q_{n_{0}-1}-q_{1})$ (resp. $q_{n_0}\mid Q_{n_0-2}(q_{n_0-1}-q_2)$) follows from $ q_{n_{0}}\mid d-Q_{n_{0}-1}$. This implies that $ q_{n_{0}}\mid q_{n_{0}-1}-q_{1}$ (resp. $q_{n_0}\mid q_{n_0-1}-q_2$), which is also impossible because of the assumption that $n_0>3$. Hence $ n_{0}=1 $ or $ n_{0}=3 $.
	
When $n_0=1$, we have $Q_{n_0-2}=1=Q_{n_0-1}$ and we have assumed that  $ q_{2}>d-1$ and shown that $ q_{1}\mid d-1 $. Then $ q_{1}\mid q_{2}-d $ by Lemma \ref{lem:newprime}. It follows that $ q_{1}+d\le q_{2} $. Again by Lemma \ref{lem:newprime}, $ q_{j} \mid d+q_{1} $ for some $ j\ge 2 $ and so $ q_{2}\le d+q_{1} $. Hence $ q_{2}=d+q_{1} $. Suppose $ q_{1}^{2}<d $. Then Lemma \ref{lem:newprime} implies that for some $j\ge 2$ we have 
\[
 d+q_{1}=q_{2}\le q_{j}\mid d-q_{1}^{2},
\]
a contradiction. Hence $ q_{1}<d<q_{1}^{2} $. Assume $ q_{1}\ge 11 $. Note that for any prime $ p<q_{1} $, if $p\nmid D$, then $ q_{1}\mid d-p $ by Lemma \ref{lem:newprime}, as $ \Omega_{D}^{-}(p) $ is even. It follows that $ q_{1}\mid p-1 $ from $ q_{1}\mid d-1 $, which is impossible. This implies that $ p\mid d $ for any prime $ p<q_{1} $. Since $ q_{1}\ge 11=p_{5} $,
	\begin{align*}
		q_{1}^{2}<\prod\limits_{p<q_{1}}p=\prod\limits_{\substack{p\mid d\\p<q_{1}}}p\mid d
	\end{align*}
	by Proposition \ref{prop20} (with $ \ell=2 $). This contradicts the fact that $ d<q_{1}^{2} $. For each $ q_{1}\in \{2,3,5,7\} $, there are only finitely many non-square discriminants $ D<0$ satisfying $ q_{1}<d<q_{1}^{2} $. By directly checking, one can see that only $ D=-3,-4 $ satisfy $ i(D)=1 $ and $ q_{2}>d-1 $.
	
	When $ n_{0}=3 $, we have $ Q_{n_{0}-2}=q_{1} $, $ Q_{n_{0}-1}=q_{1}q_{2} $, $ q_{3}\mid d-q_{1}q_{2} $ and the assumption $ q_{4}>d-q_{1} $. Suppose that $ d>q_{1}^{3}q_{2} $. Then $ q_{3}\mid d-q_{1}^{3}q_{2} $ by Lemma \ref{lem:newprime}, since $ q_{4}>d-q_{1}^{3}q_{2} $. Hence
\[
q_{3}\mid (d-q_{1}q_{2})-(d-q_{1}^{3}q_{2})=q_{1}q_{2}(q_{1}-1)(q_{1}+1), 
\]
a contradiction. So $ d<q_{1}^{3}q_{2} $ and hence $ q_{1}q_{2}q_{3}=Q_{n_{0}}<d<q_{1}^{3}q_{2} $ (with $n_{0}=3$). It follows that $ q_{2}<q_{3}<q_{1}^{2} $ and we further have $ q_{1}^{3}<Q_{3}<d<q_{1}^{3}q_{2}<q_{1}^{5} $. Since $q_2q_3<Q_3<d$ (using the definition of $n_0=3$), Lemma \ref{lem:newprime} and $ q_{4}>d-q_{1} $ imply that $ q_{1}\mid d-q_{2}q_{3}>0 $. Assume that $ q_{1}\ge 31$. For any prime $ p<q_{1} $, if $ p\nmid D $, then $ (D/p)=1 $. Since $ D<0 $, $ d>Q_{3}>pq_{2}q_{3} $ and $ \Omega_{D}^{-}(pq_{2}q_{3}) $ is even, we have $ q_{1}\mid d-pq_{2}q_{3} $ by Lemma \ref{lem:newprime}. This implies
	\[q_{1}\mid (d-q_{2}q_{3})-(d-pq_{2}q_{3})=(p-1)q_{2}q_{3}\]
	and so $ q_{1}\mid p-1 $, which contradicts $ p<q_{1} $. Hence $ p\mid d $ for any prime $ p<q_{1} $. Since $ q_{1}\ge p_{11}=31 $, we conclude from Proposition \ref{prop20} (taking $ \ell=5 $) that
\[
q_{1}^{5}<\prod_{p<q_{1}}p=\prod\limits_{\substack{p\mid d\\p<q_{1}}}p\leq d.
\]
This contradicts $ d<q_{1}^{5} $ and hence
	\[ q_{1}\in \{2,3,5,7,11,13,17,19,23,29\}.\]
	 For $3\le d\le 23^{5} $, one can check that only \[ D\in \{-3,-4,-7,-8,-11,-12,-15,-16,-19,-20,-24\} \]
	 satisfies $ D\equiv 0,1\pmod{4} $ and $ q_{4}>d-q_{1} $, but the turning index $ i(D)\not=3 $ for these $ D $. For $ 23^{5}<d<29^{5} $, a direct computer check shows that there does not exist a choice of $ D $ simultaneously satisfying the conditions $ D\equiv 0,1\pmod{4} $, $ q_{1}=29 $ and $ q_{4}>d-q_{1} $.
\end{proof}

We now begin bounding the primes $q_i$ in the sequence of primes from $\mathbb{P}(D)$ in terms of the products of previous primes from the sequence. 
\begin{lem}\label{lem23}
	 Let $ \{q_{i}\}_{D} $ be the prime sequence associated with a given non-square discriminant $ D $. Then $ q_{i+1}<q_{1}q_{2}\cdots q_{i} $ holds for $ i\ge i(D)+1 $ except for $ D=-3,5 $. In particular, we have the following:
\noindent

\noindent
\begin{enumerate}[leftmargin=*,label={\rm(\arabic*)}]		
\item if $ 2\mid i(D) $, then $ d+q_{i+1}\le q_{1}q_{2}\cdots q_{i} $ holds for $ i\ge i(D)+1 $; 
\item if $ 2\nmid i(D) $, then $ d+q_{i+1}\le q_{1}q_{2}\cdots q_{i} $ holds for $ i\ge i(D)+2 $;
\item if $ D>0 $ and $ 2\nmid i(D) $, then $ q_{i(D)+2}<q_{1}q_{2}\cdots q_{i(D)+1} $ holds except for $ D=5 $;
\item if $ D<0 $ and $ 2\nmid i(D) $, then $ q_{i(D)+2}<q_{1}q_{2}\cdots q_{i(D)+1} $ holds except for $ D=-3 $.
\end{enumerate}
\end{lem}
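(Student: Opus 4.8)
The plan is to prove the single strong inequality $d+q_{i+1}\le Q_i$ wherever possible, since $d\ge 3$ makes it imply the desired $q_{i+1}<Q_i$; the four itemized statements then record exactly which indices and parities admit this strong form and which leftover case admits only the strict (non-strong) bound. Throughout, the engine is Lemma \ref{lem:newprime} applied to a factorization $Q_i=s_1s_2$: since the prime $q$ it produces lies in $\mathbb{P}(D)$ and is coprime to $s_1s_2=Q_i$, it differs from each of $q_1,\dots,q_i$ and hence $q\ge q_{i+1}$, so any bound $q\le M$ yields $q_{i+1}\le M$. The whole game is to choose the split $Q_i=s_1s_2$ so that the parity of $\Omega_D^-(s_2)$ and the sign of $s_2-ds_1$ fall into an admissible case of Lemma \ref{lem:newprime} while keeping $M$ small.

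First I would dispose of the odd indices. For odd $i\ge i(D)+1$ take $s_1=1$ and $s_2=Q_i$; then $\Omega_D^-(Q_i)=i$ is odd and $Q_i>d$ by Lemma \ref{lem:turning_index}, so the first case gives $M=Q_i-d$ and hence $d+q_{i+1}\le Q_i$ at once. This already explains the parity shift between (1) and (2): when $i(D)$ is even the first relevant index $i(D)+1$ is odd and is covered here, whereas when $i(D)$ is odd that index is even and must be postponed, the strong bound only resuming at the odd index $i(D)+2$.

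For even $i$ I would factor $Q_i=q_i\cdot Q_{i-1}$, i.e.\ $s_1=q_i$, $s_2=Q_{i-1}$ with $\Omega_D^-(s_2)=i-1$ odd. If $Q_{i-1}>dq_i$ the first case of Lemma \ref{lem:newprime} applies and the strong bound follows a fortiori; if $Q_{i-1}<dq_i$ and $D>0$ the ``$D>0$, $\Omega_D^-$ odd'' case gives $M=dq_i-Q_{i-1}$, and $d+q_{i+1}\le Q_i$ reduces to $d\le Q_{i-1}$, true for $i\ge i(D)+2$. The delicate regime is $D<0$ with $Q_{i-1}<dq_i$, where this split is inadmissible, since for $D<0$ and $s_2<ds_1$ Lemma \ref{lem:newprime} demands $\Omega_D^-(s_2)$ even. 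There I would re-split, absorbing extra primes into $s_1$ so that $s_2$ carries an even number of the $q_j$ with product $s_2$ lying in the window $[d,\sqrt{dQ_i})$; for such a split the second case yields $M=ds_1-s_2$, and $d+q_{i+1}\le Q_i$ again reduces to the clean condition $d\le s_2$. Producing an even-size subset of $\{q_1,\dots,q_i\}$ with product in that window is possible once the available products already exceed $d$, i.e.\ away from the turning index.

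The main obstacle is precisely the behaviour near the turning index, and in particular the first even index $i=i(D)+1$ when $i(D)$ is odd. Here $Q_{i-1}=Q_{i(D)}<d$, so no even-size subset can have product $\ge d$, and the best split $s_1=q_i$, $s_2=Q_{i(D)}$ delivers only the strict bound of (3)/(4): the estimate $q_{i(D)+2}\le d\,q_{i(D)+1}-Q_{i(D)}$ gives $q_{i(D)+2}<Q_{i(D)+1}$ exactly when $d-Q_{i(D)}<Q_{i(D)}/q_{i(D)+1}$, a genuine constraint that fails for the smallest discriminants. I would isolate these by the same Proposition \ref{prop20} and prime-product estimates used to bound $q_1$ in Lemma \ref{lem22}, reducing to a finite list of $D$, and then check directly that $D=5$ (for $D>0$) and $D=-3$ (for $D<0$) are the only survivors; indeed $q_3=7>6=q_1q_2$ for $D=5$ and $q_3=11>10=q_1q_2$ for $D=-3$ show the inequality genuinely fails there. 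Assembling the odd-index bound, the even-index bounds on their stated ranges, and this finite exceptional analysis yields all of (1)--(4), and with it $q_{i+1}<Q_i$ for $i\ge i(D)+1$ outside $D=-3,5$.
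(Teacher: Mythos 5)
Your overall strategy---reading off $d+q_{i+1}\le Q_i$ from Lemma \ref{lem:newprime} applied to a suitable splitting $Q_i=s_1s_2$---is the right one, and your treatment of odd $i$ (take $s_1=1$, $s_2=Q_i$, $M=Q_i-d$) coincides with the paper's. But there are two genuine gaps. First, for even $i$ with $D<0$ you reduce to finding an even-size subset of $\{q_1,\dots,q_i\}$ whose product lies in the window $[d,\sqrt{dQ_i})$, and you assert without proof that such a subset exists ``away from the turning index.'' This is the crux of that case and is not obvious: at the first admissible even index the only even-size subsets available may be pairs and the full set, the full set is always too large, and nothing forces a pair product into the window. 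The paper avoids the issue entirely by using the second (``Moreover'') clause of Lemma \ref{lem:newprime}, which carries no sign conditions on $D$ or on $s_2-ds_1$: with $s_1=q_1$ and $s_2=Q_i/q_1$ one gets $q_{i+1}\le dq_1+Q_i/q_1$, and a short computation using $Q_{i-1}>d$ and $q_i\ge q_1+5$ gives $d+q_{i+1}\le Q_i$ uniformly in the sign of $D$. You do not appear to have used this clause at all.

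Second, and more seriously, your argument for (3) and (4) does not go through. Writing $n_0:=i(D)$, a single application of Lemma \ref{lem:newprime} with $s_1=q_{n_0+1}$, $s_2=Q_{n_0}$ gives $q_{n_0+2}\le dq_{n_0+1}-Q_{n_0}$, which implies $q_{n_0+2}<Q_{n_0+1}$ only when $d-Q_{n_0}<Q_{n_0}/q_{n_0+1}$. The set of discriminants violating this condition is infinite, since $d$ may sit anywhere in the interval $(Q_{n_0},Q_{n_0+1})$; so the failures cannot be ``isolated by Proposition \ref{prop20} and reduced to a finite list'' in the way $q_1$ is bounded in Lemma \ref{lem22}. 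The paper instead assumes $q_{n_0+2}>Q_{n_0+1}$ and extracts a web of divisibility relations from many applications of Lemma \ref{lem:newprime} (e.g.\ $q_{n_0+1}\mid d\pm Q_{n_0}$, $q_{n_0}\mid d-Q_{n_0-1}$, $q_{j_i}\mid d-q_iQ_{n_0-1}$), which by pairwise subtraction force $n_0=1$ and then structural identities such as $q_1+q_2=d$; only after that does the problem collapse to a finite verification yielding $D=5$ and $D=-3$ (in case (4) the argument is organized around four auxiliary assertions and also rules out $(D/2)\neq -1$ separately). That multi-step contradiction argument is the heart of the lemma and is absent from your proposal. Your identification of the exceptions $D=5$ (where $q_3=7>6=Q_2$) and $D=-3$ (where $q_3=11>10=Q_2$) is correct, but the proof that they are the only ones is missing.
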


\begin{proof}
	Write $ n_{0}=i(D) $. Note that $ Q_{n_{0}}<d $ and $ Q_{n_{0}+1}>d $ by Lemma \ref{lem:turning_index}.

\noindent
(1) If $ 2\mid n_{0} $, then for $ i\ge n_{0}+1 $, consider
	 \begin{equation}\label{eqn20}
	 {N_{i}=}
	 \begin{cases}
	 Q_{i}-d &  2\nmid i,  \\
	 dq_{1}+Q_{i}/q_{1} &  2\mid i.
	 \end{cases}	
	 \end{equation}
	 Clearly, $ N_{i}>0$, as $ Q_{i}\ge Q_{n_{0}+1}>d $. If $ 2\nmid i $, then by Lemma  \ref{lem:newprime} we have $q_{i+1}\le Q_{i}-d$, i.e., $ d+q_{i+1}<q_{1}\cdots q_{i} $. If $ 2\mid i $, then take $ s_{1}=q_{1}$ and $ s_{2}=q_{2}\cdots q_{i}=Q_{i}/q_{1}$ in Lemma \ref{lem:newprime}, it follows that 
	\begin{align*}
	q_{i+1}\le q_{j_{i}}\mid N_{i}=dq_{1}+q_{2}\cdots q_{i} 
	\end{align*}
	for some $ j_{i}\ge i+1 $. As $ 2\mid n_{0} $, $ i \ge n_{0}+2 $. It follows that $ Q_{i-1}\ge Q_{n_{0}+1}>d $.
	 Hence
	\begin{align*}
	d+q_{i+1}\le d(q_{1}+1)+\dfrac{Q_{i}}{q_{1}}<Q_{i-1}(q_{1}+1)+\dfrac{Q_{i}}{q_{1}}=\dfrac{Q_{i-1}}{q_{1}}(q_{1}(q_{1}+1)+q_{i}).
	\end{align*}
	It is sufficient to show  $q_{1}(q_{1}+1)+q_{i}<q_{1}q_{i}$. Indeed, note that $ q_{i}\ge q_{n_{0}+2}\ge q_{4}\ge q_{1}+5 $. Also, $q_{1}(q_{1}+1)<(q_{1}+5)(q_{1}-1)
	\le q_{i}(q_{1}-1)$ as required. 	

\noindent
(2)  If $ 2\nmid n_{0} $, then $ n_{0}+2 $ is odd and $ Q_{i}\ge Q_{n_{0}+2}>Q_{n_{0}+1}>d $ for $ i\ge n_{0}+2 $. Take $ N_{i} $ to be \eqref{eqn20} and the inequality follows by a similar argument to that given in the proof of part (1).
			
\noindent
(3) Suppose that $ q_{n_{0}+2}>Q_{n_{0}+1}$. Then $ q_{n_{0}+2}>d $. Since $ 2\nmid n_{0} $, $ 2\nmid \Omega_{D}^{-}(q_{i}Q_{n_{0}-1}) $ for every $i$. Also, $ D>0 $, so for each $ 1\le i\le n_{0} $, Lemma \ref{lem:newprime} implies that $q_{j_{i}}\mid d-q_{i}Q_{n_{0}-1}$ for some $j_i\geq n_0$ with $j_i\neq i$. Since $q_{j_i}<d<q_{n_0+2}$, we furthermore conclude that $ n_{0}\le j_{i}<n_{0}+2 $. In particular, taking $ i=n_{0}$, we conclude that
	\begin{align}\label{eq230}
	 q_{n_{0}+1}\mid d-Q_{n_{0}}.
	\end{align} 
	We claim that $ n_{0}=1 $. If not, then $ n_{0}\ge 3 $. Consider 
	\begin{equation*}
	\begin{cases}
	q_{j_{1}}\mid d-q_{1}Q_{n_{0}-1},   \\
    q_{j_{2}}\mid d-q_{2}Q_{n_{0}-1}.
	\end{cases}	
	\end{equation*}
	If $j_{1}=n_{0}+1 $, then $
	q_{n_{0}+1}\mid (d-q_{1}Q_{n_{0}-1})-(d-Q_{n_{0}})=Q_{n_{0}-1}(q_{n_{0}}-q_{1})$
	from \eqref{eq230}. This implies that $ q_{n_{0}+1}\mid q_{n_{0}}-q_{1}$, which is impossible because $q_{n_0+1}>q_{n_0}$. Similarly, if $ j_{2}=n_{0}+1 $, then we have $ q_{n_{0}+1}\mid q_{n_{0}}-q_{2} $, which is again impossible by the same argument. Hence $ j_{1}=j_{2}=n_{0} $ and it follows that 
\[
q_{n_{0}}\mid (d-q_{1}Q_{n_{0}-1})-(d-q_{2}Q_{n_{0}-1})=Q_{n_{0}-1}(q_{2}-q_{1}).
\]
Therefore $ q_{n_{0}}\mid q_{2}-q_{1} $, which is also impossible. We thus conclude that $n_0=1$.
	
Since $n_0=1$, Lemma \ref{lem22} (1) and Lemma \ref{lem:newprime} give $ q_{n_{0}+2}=q_{3}>d>q_{2} $, $q_{2}\mid d-q_{1}$ and $ q_{1}\mid d-q_{2}$. We may thus let $ \ell_{1},\ell_{2} $ be positive integers for which
	\begin{equation*}
	\begin{cases}
	\ell_{1}q_{1}=d-q_{2},  \\
	\ell_{2}q_{2}=d-q_{1}.
	\end{cases}
	\end{equation*}
Then $(\ell_{2}-1)q_{2}=(\ell_{1}-1)q_{1} $. It follows that $ q_{1}\mid \ell_{2}-1 $ from $ \gcd(q_{1},q_{2})=1 $. If $ \ell_{2}>1 $, then 
\[ 
\left(q_{1}+1\right)q_{2} \le \ell_{2}q_{2}=d-q_{1} 
\]
 and so, using $ d<Q_{n_{0}+1}=q_{1}q_{2}$ (as $ n_{0}=1 $),
\[
d+q_{2}+q_{1}<q_{1}q_{2}+q_{2}+q_{1}=(q_{1}+1)q_{2}+q_{1}\le d,
\]
 a contradiction. Hence $\ell_{2}=1$ and so 
\begin{equation}\label{eqn:dsum}
q_{1}+q_{2}=d
\end{equation}
 (and consequently $\ell_1=1$ as well). By Lemma \ref{lem:newprime}, there exists $q_i\in \mathbb{P}(D)\setminus\{q_1\}$ for which $q_i\mid d+q_1$. Note that $ q_{2}\nmid d+q_{1} $, since otherwise $ q_{2}\mid 2d $, which is a contradiction because $q_2>q_1\geq 2$ and $(D/q_2)=-1$ implies that $\gcd(q_2,d)=1$. Hence $i\geq 3$ and therefore
\[
q_{3}\le q_{i}\leq d+q_{1}<2d.
\]
From the assumption that $ q_{1}q_{2}=Q_2<q_{3}$ (since $n_0=1$), we deduce
\[
q_{1}q_{2}<q_{3}<2d=2(q_{1}+q_{2});
\]
that is $(q_{1}-2)(q_{2}-2)< 4 $. Hence we must have $ q_{1}=2 $ or $ q_{1}=3 $ and $ q_{2}=5 $. If $ q_{1}=3 $ and $ q_{2}=5 $, then $ D=d=8 $. One can compute $ q_{3}=11<q_{1}q_{2}=15$, a contradiction. Hence $ q_{1}=2 $ and we conclude from \eqref{eqn:dsum} that  $ q_{2}=d-2 $ and so 
\begin{equation}\label{eqn:q1q2}
q_{1}q_{2}=2d-4,
\end{equation}
 from which we see that $d>4$. Applying Lemma \ref{lem:newprime} with $s_{1}=q_{1}=2$ and $s_{2}=q_{2}=d-2$, we see that there exists $ q_{j} $ such that $ \gcd(q_{j},q_{1}q_{2})=1 $ ($ q_{1}=2 $) for which 
\[
q_{j}\mid 2d-(d-2)=d+2.
\]
Hence $ j\ge 3 $. Combining this with \eqref{eqn:q1q2} and the bound $Q_2<q_3$, it follows that 
\[
2d-4< q_{3}\le q_j\leq  d+2.
\]
So $d\le 6$, and hence $4<d\le 6$. Thus $ D=5 $, since $ D\equiv 0,1 \pmod{4}$ and $ D>0 $.

\noindent
(4) Suppose that $q_{n_{0}+2}>Q_{n_{0}+1} $. Then $  q_{n_{0}+2}>d $. We derive a contradiction by showing the following four assertions.

\noindent

\noindent
\begin{enumerate}[leftmargin=*,label={\rm(\alph*)}, align=left]	 
\item If $ (D/2)\neq -1 $, then $q_{n_{0}+2}>2d$.
\item We have $ d+Q_{n_{0}}\le Q_{n_{0}+1} $.
\item We have $q_{n_{0}+1}\mid d+Q_{n_{0}} $ and $ q_{n_{0}}\mid d-Q_{n_{0}-1} $.
\item If $(D/2)\neq -1$, then $q_{n_{0}}\nmid d+Q_{n_{0}-1}q_{n_{0}+1}$.
\end{enumerate}

Before proving the assertions (a)--(d), we demonstrate how (a)--(d) implies the claim. Assume that $D\neq -3$. If $ (D/2)\neq -1 $, as $ 2\nmid \Omega_{D}^{-}(Q_{n_{0}-1}q_{n_{0}+1}) $, Lemma \ref{lem:newprime} implies that there exists $j\geq n_0$ with $j\neq n_0+1$ for which 
\[
q_{j}\mid d+Q_{n_{0}-1}q_{n_{0}+1}.
\]
By (d), we see that $ j \ge n_{0}+2 $. Hence
\begin{equation}\label{eqn:qn0+2bnd}
q_{n_{0}+2}\le q_{j}\le d+Q_{n_{0}-1}q_{n_{0}+1}.
\end{equation}
Thus 
\begin{equation}\label{eqn:boundsqn0+2}
d+Q_{n_0}\overset{(b)}{\leq }Q_{n_0+1}<q_{n_0+2}\leq d+Q_{n_0-1}q_{n_0+1}.
\end{equation}

If $ d<Q_{n_{0}-1}q_{n_{0}+1} $, then 
\[
q_{n_{0}+2}\le d+Q_{n_{0}-1}q_{n_{0}+1}<2Q_{n_{0}-1}q_{n_{0}+1}<Q_{n_{0}+1}.
\]
This contradicts the original assumption, and hence we conclude that $Q_{n_{0}-1}q_{n_{0}+1}\leq d$, which together with \eqref{eqn:qn0+2bnd} implies that $q_{n_{0}+2}\leq 2d $. Thus by (a) we have $(D/2)=-1$.

So $q_1=2$ and $d$ is odd. By Lemma \ref{lem:newprime} we have $q_j\mid d+2Q_{n_0-1}$ for some $j$ with $j\geq n_0$. If $j\geq n_0+2$, then since $q_{n_0}\geq 2$ we have
\[
q_{n_0+2}\leq q_j\leq d+2Q_{n_0-1}\leq d+Q_{n_0},
\]
contradicting (b) (the first two inequalities in \eqref{eqn:boundsqn0+2} hold without the assumption $(D/2)\neq -1$). Thus $n_0\leq j\leq n_0+1$. If $j=n_0$, then since $q_{n_0}\mid d-Q_{n_0-1}$ by (c), we conclude that 
\[
q_{n_0}\mid (d+2Q_{n_0-1})-(d-Q_{n_0-1})=3Q_{n_0-1}.
\]
Since $\gcd(q_{n_0},Q_{n_0-1})=1$, we thus have $q_{n_0}=3$ and since $q_1=2$ we have $n_0=2$. This contradicts the assumption that $n_0$ is odd, however.

Therefore $q_{n_0+1}\mid d+2Q_{n_0-1}$. Since $q_{n_0+1}\mid d+Q_{n_0}$ by (c), we have 
\[
q_{n_0+1}\mid Q_{n_0-1}\left(q_{n_0}-2\right).
\]
Since $\gcd(q_{n_0+1},Q_{n_0-1})=1$, we see that $q_{n_0+1}\mid q_{n_0}-2$. If $q_{n_0}\neq 2$, then this contradicts $q_{n_0+1}>q_{n_0}$. Thus we see that $q_{n_0}=2$ and $n_0=1$. We hence have
\[
d<Q_{n_0+1}=2q_2
\]
and by (c) (and the fact that $Q_{n_0}=Q_1=2$) we also have 
\[
q_2\mid d+2.
\]
Writing $d+2=\ell q_2$ we have that $\ell$ is odd because $d+2$ is odd, and if $\ell\geq 3$ then 
\[
2q_2\leq (\ell-1) q_2=d+2-q_2\leq d-1<d
\]
contradicts the fact that $2q_2>d$. Thus $\ell=1$ and $q_2=d+2$. However, if $d>4$, then by Lemma \ref{lem:newprime} with $ s_{1}=1 $ and $s_2=4$, there exists $q_j\mid d-4$ with $j\geq 2$, so 
\[
d+2=q_2\leq q_j\leq d-4,
\] 
a contradiction. The only remaining case with $d\leq 4$ and odd is $d=3$ (i.e., $D=-3$).

We now move on to proving the assertions (a)--(d) under the assumption $ q_{n_{0}+2}>Q_{n_{0}+1} $ (and hence $ q_{n_{0}+2}>d $).

\noindent
(a) Suppose that $ q_{n_{0}+2}<2d $, so that 
\[
Q_{n_{0}+1}<q_{n_0+2}<2d.
\]
  As $ (D/2)\neq -1 $, $ 2\not\in \mathbb{P}(D) $. Since $ D<0 $ and $ n_{0}+1 $ is even, applying Lemma \ref{lem:newprime} with $ s_{1}=2 $ and $ s_{2}=Q_{n_{0}+1}$ (which is necessarily odd because $(D/2)\neq -1$), we deduce that $q_{n_{0}+2}\le 2d-Q_{n_{0}+1}$.  This implies that 
\[
2d=d+d<Q_{n_{0}+1}+q_{n_{0}+2}\le 2d,
\]
as $ d<Q_{n_{0}+1}<q_{n_{0}+2} $. This is a contradiction, and we conclude (a).
	 
\noindent
(b) If not, then $ Q_{n_{0}+1}<d+Q_{n_{0}} $ and 
	 \begin{equation}\label{eq25}
	  \begin{aligned}
	 (q_{n_{0}+1}-1)Q_{n_{0}}<d<Q_{n_{0}+1}<d+Q_{n_{0}}<2d,
	 \end{aligned}
	 \end{equation}	
	 as $ Q_{n_{0}}<d<Q_{n_{0}+1} $. Since $ D<0 $ and $ n_{0}+1 $ is even, if $(D/2)\neq -1$, then $Q_{n_0+1}$ is odd and $ q_{n_{0}+2}\le 2d-Q_{n_{0}+1}<2d$ by Lemma \ref{lem:newprime}. So $ (D/2)=-1 $ follows from the assertion (a). Thus $ q_{1}=2 $ and $ \gcd(d,2)=1 $.

	 Assume for contradiction that $ n_{0}\ge 3 $. Then for $i=1,2$ we have $ q_{n_{0}+1}-1>q_{i} $, and hence $ d-q_{i}Q_{n_{0}}>0 $ by the first inequality in \eqref{eq25}. By Lemma \ref{lem:newprime}, we have 
\[
q_{n_{0}+1}\mid d-q_{1}Q_{n_{0}}\qquad \text{ and }\qquad q_{n_{0}+1}\mid d-q_{2}Q_{n_{0}},
\]	 since $ q_{n_{0}+2}>d $, $ D<0 $ and $ \Omega_{D}^{-}(q_{i}Q_{n_{0}}) $ ($ i=1,2 $) is even. This implies that $ q_{n_{0}+1}\mid Q_{n_{0}}(q_{2}-q_{1}) $. Since $\gcd(q_{n_0+1},Q_{n_0})=1$, this implies that $q_{n_0+1}\mid q_2-q_1$, which is impossible because $q_{n_0+1}>q_2$. Hence $ n_{0}=1 $. Then \eqref{eq25} and $ q_{1}=2 $ gives
\[
2q_{2}-2=2(q_{2}-1)<d<2q_{2},
\]
and so $ d=2q_{2}-1  $, i.e., $ 2q_{2}=d+1 $. Note that $ q_{2}>2 $ and $ d+1 $ is even. Hence $ d\not=3,4 $ and thus $ d>4 $. Since $q_{3}=q_{n_0+2}>Q_{n_0+1}=2q_{2}>d$ by assumption, Lemma \ref{lem:newprime} implies that $ q_{2}\mid d-4 $ and so $ q_{2}\mid (d+1)-(d-4)=5 $. Accordingly, $ q_{2}=5 $ and we deduce $ 4<d<2q_{2}=10 $. But none of the sequences $ \{q_{i}\}_{D} $ for $ -10<D<-4 $ satisfies $ q_{1}=2 $ and $ q_{2}=5 $ at the same time. So the assertion (b) is true.

\noindent
(c) Observe that $  d+Q_{n_{0}}\le Q_{n_{0}+1}<q_{n_{0}+2}$ from the assertion (b) and the assumption $ Q_{n_{0}+1}<q_{n_{0}+2} $. Since $ 2\nmid n_{0} $, we have
	 \begin{equation}\label{eq21}
	 \begin{aligned}
	 q_{n_{0}+1}\mid d+Q_{n_{0}} 
	 \end{aligned}
	 \end{equation}
	 by Lemma \ref{lem:newprime}. Since $ n_{0}-1 $ is even, Lemma \ref{lem:newprime} implies that either $ q_{n_{0}+1}\mid d-Q_{n_{0}-1} $ or $ q_{n_{0}}\mid d-Q_{n_{0}-1} $,  as $ q_{n_{0}+2}>d $. Assume that $ q_{n_{0}+1}\mid d-Q_{n_{0}-1} $. From \eqref{eq21}, 
     \begin{align*}
     q_{n_{0}+1}\mid (d+Q_{n_{0}})-(d-Q_{n_{0}-1})=Q_{n_{0}-1}(q_{n_{0}}+1),
     \end{align*}
     which implies $ q_{n_{0}+1}\mid q_{n_{0}}+1 $. This may only occur if $q_{n_{0}}=2$ and $ q_{n_{0}+1}=3$, which together imply that $ n_{0}=1$ and $ q_{1}=2<d<q_{1}q_{2}=6 $. But there are no sequences $\{q_{i}\}_{D}$ associated with $ -6<D<-2 $ such that $ q_{1}=2 $ and $ q_{2}=3 $. So we must have 
     \begin{equation}\label{eq22}
     \begin{aligned}
     q_{n_{0}}\mid d-Q_{n_{0}-1}.
     \end{aligned}
     \end{equation}
     
\noindent
(d)  Suppose that $q_{n_{0}}\mid d+Q_{n_{0}-1}q_{n_{0}+1}$.  
    
 As $ (D/2)\not=-1 $, $ 2\not\in\mathbb{P}(D) $. From the assertion (a), $ q_{n_{0}+2}>2d $. Applying Lemma \ref{lem:newprime} with $ s_{1}=2 $ and $ s_{2}=Q_{n_{0}-1}$ (which is odd because $2\notin \mathbb{P}(D)$) gives $ q_{n_{0}}\mid 2d-Q_{n_{0}-1} $ or $ q_{n_{0}+1}\mid 2d-Q_{n_{0}-1} $. From \eqref{eq22}, it follows that $ q_{n_{0}+1}\mid 2d-Q_{n_{0}-1} $ (otherwise, $ q_{n_{0}}\mid d $).
    From \eqref{eq21}, we have
	\begin{align*}
	q_{n_{0}+1}\mid 2(d+Q_{n_{0}})-(2d-Q_{n_{0}-1})=Q_{n_{0}-1}(2q_{n_{0}}+1).
	\end{align*}
  	It follows that $q_{n_{0}+1}\mid 2q_{n_{0}}+1$. Since $2q_{n_0}+1$ is odd, there exists odd $\ell$ for which 
\[
\ell q_{n_0+1}= 2q_{n_{0}}+1<3q_{n_0}<3q_{n_0+1},
\]
and hence $\ell=1$ and 
	\begin{equation}\label{eq24}
	q_{n_{0}+1}=2q_{n_{0}}+1.
	\end{equation}
From the assumption $q_{n_{0}}\mid d+Q_{n_{0}-1}q_{n_{0}+1}$ and \eqref{eq22}, we also have
	\begin{align*}
	q_{n_{0}}\mid (d+Q_{n_{0}-1}q_{n_{0}+1})-(d-Q_{n_{0}-1})=Q_{n_{0}-1}(q_{n_{0}+1}+1).
	\end{align*}
Thus $ q_{n_{0}}\mid q_{n_{0}+1}+1 $, and combining this with \eqref{eq24}, we have 
	\begin{align*}
	q_{n_{0}}\mid q_{n_{0}+1}+1=2q_{n_{0}}+2.
	\end{align*} 
	Hence $ q_{n_{0}}=2 $, contradicting $ (D/2)\not=-1 $, and we conclude the assertion (d).
\end{proof}

We are now ready to prove Theorem \ref{thm1}.
\begin{proof}[Proof of Theorem \ref{thm1}]
 An easy computation together with Lemma \ref{lem23}(2) shows that Theorem \ref{thm1} is true for $ D=-3,-4,5$. For $ D\not=-3,-4,5 $, if $ q_{i_{0}+1} $ is the least prime greater than $d$, then $ i_{0}\ge i(D)+1 $ by Lemma \ref{lem22} and so $ q_{i+1}<q_{1}q_{2}\cdots q_{i} $ holds for $ i\ge i_{0} $ by Lemma \ref{lem23}. 
\end{proof}

\subsection{An application of Theorem \ref{thm1}}\label{sub23}
The following theorem, given by Dickson \cite[Theorem 97, p.\hskip 0.1cm 109]{dickson_modern_1939}, will be applied to the proof of Theorem \ref{thm5}.

\begin{thm}[Dickson]\label{thm231}
	Let $ a,b,c $ be positive integers with $ a\le b\le c $. Suppose that $ \gcd(a,b,c)=1 $ and no two of the $ a,b,c $ has an odd prime divisor in common. If there exists a positive odd integer $ n $ prime to $ abc $ such that the equation
	\begin{align*}
	ax^{2}+by^{2}+cz^{2}\equiv n\pmod{8}
	\end{align*} 
	is solvable and $ n $ is not represented by the form $ ax^{2}+by^{2}+cz^{2} $, then $ ax^{2}+by^{2}+cz^{2} $ is irregular.
\end{thm}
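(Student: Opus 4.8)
The plan is to prove irregularity by exhibiting a single integer that is locally represented everywhere but not globally represented, and the natural candidate is $n$ itself. By the definition of regularity recalled above, it suffices to verify that $n$ is represented by $f=ax^{2}+by^{2}+cz^{2}$ in $\Z_{p}$ for every prime $p$ together with $p=\infty$; since $n$ is assumed \emph{not} to be globally represented, establishing local representability at every place immediately forces $f$ to be irregular. Thus the whole argument reduces to a place-by-place check of the local representability of the single value $n$.

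At the infinite place the form is positive definite because $a,b,c>0$, so it represents every positive real and in particular $n>0$; hence $n$ is represented over $\R=\Z_{\infty}$. For an odd prime $p$ with $p\nmid abc$, the reduction of $f$ modulo $p$ is a nondegenerate ternary form over $\F_{p}$, which is isotropic and therefore universal, so it represents the class of $n$. Any such solution is automatically nonsingular, since in odd residue characteristic the gradient $2(ax,by,cz)$ cannot vanish at a nonzero point while a unit value forces the point to be nonzero; Hensel's lemma then lifts the solution to $\Z_{p}$.

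The treatment of the odd prime divisors of $abc$ is where the coprimality hypotheses are used. Suppose $p$ is an odd prime with $p\mid a$; then the assumption that no two of $a,b,c$ share an odd prime divisor gives $p\nmid bc$, and since $n$ is coprime to $abc$ we have $p\nmid n$. Reducing modulo $p$ kills the $x$-term and leaves the nondegenerate binary form $by^{2}+cz^{2}$ with unit coefficients, and a nondegenerate binary form over a finite field represents every nonzero element; in particular it represents the unit $n\bmod p$, and the resulting solution is again nonsingular, so Hensel's lemma produces a representation of $n$ over $\Z_{p}$. The same argument applies verbatim if $p\mid b$ or $p\mid c$, and the hypothesis $\gcd(a,b,c)=1$ guarantees that no single odd prime divides all three coefficients, so in each case two of the three coefficients remain units modulo $p$.

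The remaining and most delicate place is $p=2$, and this is exactly where the hypothesis that $ax^{2}+by^{2}+cz^{2}\equiv n\pmod{8}$ is solvable enters. The governing local fact is that for an odd $2$-adic integer $n$ a nondegenerate quadratic form over $\Z_{2}$ represents $n$ if and only if the congruence $f(x)\equiv n\pmod{8}$ admits a solution; the modulus $8=2^{3}$ is precisely the threshold at which the $2$-adic form of Hensel's lemma applies, since $(\Z_{2}^{\times})^{2}=1+8\Z_{2}$ and a partial derivative of $f$ evaluated at an odd argument has $2$-adic valuation $1$. Invoking this criterion together with the given mod-$8$ solvability and the oddness of $n$ yields a representation of $n$ over $\Z_{2}$. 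Assembling the four cases shows that $n$ is locally represented at every place, and since $n$ is not globally represented the form $f$ is irregular. I expect the $2$-adic step to be the main obstacle, because it is the only place where congruence solvability must be upgraded to genuine $p$-adic solvability and where the $p=2$ peculiarities of quadratic-form theory—squares detected only modulo $8$ and the weaker form of Hensel's lemma—must be handled with care; the remaining places become routine once the coprimality conditions are in force.
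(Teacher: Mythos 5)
Your proposal is correct, but there is nothing in the paper to compare it against: the paper does not prove this statement at all, it imports it as a black box, citing Theorem 97 on p.~109 of Dickson's \emph{Modern elementary theory of numbers}. Your argument is the standard (and the right) one given the paper's definition of regularity: exhibit $n$ as locally represented at every place, so that its global non-representation forces irregularity. The place-by-place verification is sound --- positive definiteness at $\infty$; isotropy/universality of nondegenerate ternary forms over $\F_p$ plus Hensel for odd $p\nmid abc$; the coprimality hypotheses reducing to a nondegenerate binary form, which represents every unit of $\F_p$, for odd $p\mid abc$; and the mod-$8$ criterion at $p=2$. Two spots deserve one more explicit line. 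First, for odd $p\nmid abc$ with $p\mid n$, your nonsingularity sentence only covers unit values; in the zero class you must take the nontrivial isotropic vector itself as the base point (it is nonzero by definition, hence nonsingular), which is implicit but not stated. Second, at $p=2$ the claim that ``a partial derivative of $f$ evaluated at an odd argument has $2$-adic valuation $1$'' needs the corresponding \emph{coefficient} to be odd as well, since $\partial f/\partial x=2ax$ has valuation $1+v_2(a)+v_2(x)$; this is rescued precisely by the oddness of $n$, which forces some diagonal term $ax_0^2$ (say) of the mod-$8$ solution to be odd, so that both $a$ and $x_0$ are odd and strong Hensel's lemma applies with threshold $8$. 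With those two sentences added, your proof is a complete, self-contained replacement for the citation, which is arguably a gain: the paper's reader must otherwise consult Dickson's book for a result whose proof, as you show, is a routine local-global check for diagonal forms.
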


\begin{proof}[Proof of Theorem \ref{thm5}] Let $ f(x,y,z)=ax^{2}+by^{2}+cz^{2} $ be regular and assume for contradiction that $ c>4ab+3\delta_{ab,1} $. Choosing $ D=-4ab$, we have $ d=4ab $. Consider the prime sequence $ \{q_{i}\}_{-4ab}$ and let $i_0$ be chosen such that $q_{i_0+1}$ is the least in $ \{q_{i}\}_{-4ab}$ which is greater than $4ab+3\delta_{ab,1}$, i.e.,
	\begin{align*}
	q_{1}<q_{2}<\cdots<q_{i_{0}}<4ab+3\delta_{ab,1}<q_{i_{0}+1}<\cdots.
	\end{align*}
	For each $ i=1,2,\ldots $, we claim that $ q_{i} $ must be not represented by $ ax^{2}+by^{2} $ in $ \mathbb{Z} $. Otherwise, we have $ q_{i}=ax_{0}^{2}+by_{0}^{2} $ for some $ x_{0},y_{0}\in \mathbb{Z} $. Namely, $(2ax_{0})^{2}+4aby_{0}^{2}=4aq_{i}$. Take $ x_{1}=2ax_{0}$ and $ y_{1}=y_{0} $. Then $x_{1}^{2}+4aby_{1}^{2}\equiv 0 \pmod{q_{i}} $ is solvable. Since $q_i$ is an inert prime in the ring of integers of $\Q(\sqrt{-4ab})$ (as $(-4ab/q_{i})=-1$), this implies that $ x_{1}\equiv y_{1}\equiv 0\pmod{q_{i}} $. It follows that $q_{i}^{2}\mid x_{1}^{2}+4aby_{1}^{2}=4aq_{i}$, a contradiction.
	
	If $ \gcd(q_{j},c)=1 $ for some $ 1\le j\le i_{0} $, then $\gcd(q_{j},abc)=1$. For each odd prime $ p $, $ax^{2}+by^{2}+cz^{2}\equiv q_{j}\pmod{p^{t}}$ is solvable for any $ t\in\mathbb{N} $. Also, note that $ ax^{2}+by^{2}+cz^{2}\equiv q_{j}\pmod{2^{t}}$ is solvable for any $ t\in \mathbb{N} $ from the condition \eqref{eq12}. Hence
	$ q_{j} $ is locally represented by $ f $. However, $ q_{j}<4ab+3\delta_{ab,1}<c $ and $ q_{j} $ is not represented $ ax^{2}+by^{2} $ in $ \mathbb{Z} $, so $ q_{j} $ is not globally represented by $f $. It follows that $ f $ is irregular by Theorem \ref{thm231}, a contradiction.
	
	Suppose that $ q_{1}q_{2}\cdots q_{i_{0}}\mid c $. We assert that $ q_{1}q_{2}\cdots q_{i_{0}}q_{i_{0}+1}\mid c $. If not, then $ \gcd(q_{i_{0}+1},c)=1 $. It implies that $ q_{i_{0}+1} $ is locally represented by $ f $. But $q_{i_{0}+1}<q_{1}\cdots q_{i_{0}}\le c $ by Theorem \ref{thm1}. Applying the argument as above, we see that $ q_{i_{0}+1} $ is not globally represented by $ f $. Again, $ f $ is irregular by Theorem \ref{thm231}. Hence the claim is true.
	
	Repeating inductively the argument to $ q_{i} $ for $ i\ge i_{0}+2 $, we deduce that $ q_{1}q_{2}\cdots q_{i_{0}}q_{i_{0}+1}\cdots\mid c $. However, $ c $ is finite and so the assumption is false.
\end{proof}

\section*{Acknowledgments}
The author would like to thank his supervisor Dr. Ben Kane for his indispensable guidance in this paper, and also thank the referee for his/her helpful comments and suggestions.

\bibliographystyle{plain}

\end{document}